\newtheorem{theorem}{Theorem}
\newtheorem{proposition}[theorem]{Proposition}
\newtheorem{definition}[theorem]{Definition}
\newtheorem{lemma}[theorem]{Lemma}
\newtheorem{corollary}[theorem]{Corollary}
\newcommand{\st}{\ \ |\ \ }
\newcommand{\A}{\mathcal{A}}
\newcommand{\B}{\mathcal{B}}
\newcommand{\C}{\mathcal{C}}
\newcommand{\D}{\mathbf{D}}
\newcommand{\F}{\mathcal{F}}
\newcommand{\N}{\mathbb{N}}
\newcommand{\Nbh}{\mathcal{N}}
\newcommand{\Pw}{\mathcal{P}}
\newcommand{\Q}{\mathcal{Q}}
\newcommand{\R}{\mathbb{R}}
\title{Sequences and nets in topology}
\author{Stijn Vermeeren (University of Leeds)}
\date{\today}
\begin{document}

\maketitle

In a metric space, such as the real numbers with their standard metric, a set $A$ is open if and only if no sequence with terms outside of $A$ has a limit inside $A$. Moreover, a metric space is compact if and only if every sequence has a converging subsequence. However, in a general topological space these equivalences may fail. Unfortunately this fact is sometimes overlooked in introductory courses on general topology, leaving many students with misconceptions, e.g. that compactness would always be equal to sequence compactness. The aim of this article is to show how sequences might fail to characterize topological properties such as openness, continuity and compactness correctly. Moreover, I will define nets and show how they succeed where sequences fail.

This article grew out of a discussion I had at the University of Leeds with fellow PhD students Phil Ellison and Naz Miheisi. It also incorporates some work I did while enrolled in a topology module taught by Paul Igodt at the Katholieke Universiteit Leuven in 2010.

\section{Prerequisites and terminology}

I will assume that you are familiar with the basics of topological and metric spaces. Introductory reading can be found in many books, such as \cite{kelley} and \cite{munkres}.

I will frequently refer to a topological space $(X, \tau)$ by just the unlying set $X$, when it is irrelevant or clear from the context which topology on $X$ is considered. Remember that any metric space $(X,d)$ has a topology whose basic opens are the open balls \[ B(x,\delta) = \{ y \st d(x,y) < \delta \} \] for all $x \in X$ and $\delta > 0$.

A \textbf{neighbourhood} of a point $x$ in a topological space is an open set $U$ with $x \in U$. Note that some people call $U$ a neighbourhood of $x$ if $U$ just contains an open set containing $x$ \cite[p.~97]{munkres}, but in this article neighbourhoods are always open themselves.

A \textbf{sequence} $(x_n)$ \textbf{converges} to a point $y$ if every neighbourhood of $y$ contains $x_n$ for $n$ large enough. We write $x_n \to y$ and say that $y$ is a \textbf{limit} of the sequence $(x_n)$. If $(x_n)$ converges to $y$, then so does every subsequence of $(x_n)$. If $f: X \to Y$ is a continuous function and $x_n \to y$ in $X$, then also $f(x_n) \to f(y)$ in $Y$. (We say that continuous functions \textbf{preserve} convergence of sequences.) Convergence in a product space is pointwise, i.e. a sequence $(x_n)$ in $\prod_{i \in I} X_i$ converges to $y$ if and only if $x_n(i) \to y(i)$ in $X_i$ for all $i \in I$.

A topological space is \textbf{Hausdorff} if for every two distinct points $x$ and $y$, we can find a neighbourhood of $x$ and a neighbourhood of $y$ that are disjoint. Sequences in general can have more than one limit, but in a Hausdorff space limits (if they exist at all) are unique. Indeed, a sequence cannot be eventually in two disjoint neighbourhoods at once.

A set $X$ is \textbf{countable} when there is a surjection from $\N = \{0, 1, 2, 3, \ldots \}$ onto $X$. So $X$ is countable if and only if $X$ is finite of in bijection with the natural numbers. A countable union of countable sets is still countable. Cantor's famous diagonal argument proves that the unit interval $[0,1]$ and $\R$ are  \textbf{uncountable}.\cite{cantor}

A few of my examples will make use of  \textbf{ordinal numbers}. If you are unfamiliar with ordinal numbers, you can either find background reading in \cite{hrbacek} or you can skip over these examples. I write the first infinite ordinal (i.e. the order type of the natural numbers) as $\omega_0$ and the first uncountable ordinal as $\omega_1$. Because a countable union of countable ordinals is still countable, no countable sequence of countable ordinals can have $\omega_1$ as limit. In other words: the cofinality of $\omega_1$ is $\omega_1$.

\section{Open versus sequentially open}

In a topological space $X$, a set $A$ is \textbf{open} if and only if every $a \in A$ has a neighbourhood contained in $A$. $A$ is \textbf{sequentially open} if and only if no sequence in $X \setminus A$ has a limit in $A$, i.e. sequences cannot \emph{converge out of} a sequentially closed set.


If $X$ is a metric space, then the two notions of open and sequentially open are equivalent. Indeed if $A$ is open, $(x_n)$ is a sequence in $X \setminus A$ and $y \in A$, then there is a neighbourhood $U$ of $y$ contained in $A$. Hence $U$ cannot contain any term of $(x_n)$, so $y$ is not a limit of the sequence and $A$ is sequentially open. Conversely, if $A$ is not open, then there is an $y \in A$ such that every neighbourhood of $y$ intersects $X \setminus A$. In particular we can pick an element \[ x_n \in (X \setminus A) \cap B(y, \frac{1}{n+1} ) \] for all $n \in \N$. The sequence $(x_n)$ in $X \setminus A$ then converges to $y \in A$, so $A$ is not sequentially open.

The implication from open to sequentially open is true in any topological space.

\begin{proposition}
\label{openseqopen}
In any topological space $X$, if $A$ is open, then $A$ is sequentially open.
\end{proposition}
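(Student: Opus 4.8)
The plan is to unwind the definitions directly: this is exactly the metric-space argument given just above for the implication ``open $\Rightarrow$ sequentially open'', with the observation that it never actually used the metric. So I would essentially transcribe that half of the earlier discussion, stripped of any mention of balls $B(y,\frac{1}{n+1})$.

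Concretely, I would argue by contraposition on the definition of sequentially open. Suppose $A$ is open, let $(x_n)$ be any sequence lying entirely in $X \setminus A$, and let $y \in A$ be arbitrary; I want to show $x_n \not\to y$. The key step — and the only place openness is used — is that since $A$ is open and $y \in A$, the set $A$ is itself a neighbourhood of $y$. Now apply the definition of convergence with this particular neighbourhood: if we had $x_n \to y$, then $A$ would contain $x_n$ for all sufficiently large $n$, contradicting $x_n \in X \setminus A$ for every $n$. Hence no sequence in $X \setminus A$ converges to a point of $A$, which is precisely the statement that $A$ is sequentially open.

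I do not expect any real obstacle here; the statement is a pure definition chase. The one thing worth flagging for the reader is the convention adopted in the Prerequisites section that neighbourhoods are required to be open, which is what makes ``$A$ is a neighbourhood of each of its points'' literally true for an open set $A$ and lets the argument go through in one line. With the alternative convention one would instead say ``$A$ contains an open neighbourhood of $y$'' and the proof is unchanged in substance.
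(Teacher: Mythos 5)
Your proof is correct and is essentially the paper's own argument: both exploit that an open $A$ containing $y$ gives a neighbourhood of $y$ (you use $A$ itself, the paper says ``a neighbourhood of $y$ contained in $A$'') which can contain no term of a sequence in $X \setminus A$, so convergence to $y$ is impossible. No difference in substance.
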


We can just copy the proof for metric spaces, it remains valid in any topological space.

\begin{proof}
Suppose that $A$ is open, let $(x_n)$ be a sequence in $X \setminus A$ and take any $y \in A$. There is a neighbourhood of $y$ contained in $A$, so this neighbourhood doesn't contain any terms of $(x_n)$. Hence the sequence doesn't converge to $y$, as required.
\end{proof}

It is tempting to think that the converse might also hold in any topological space. When this is indeed the case, we call the space sequential.

\begin{definition}
A topological space is \textbf{sequential} when any set $A$ is open if and only if $A$ is sequentially open.
\end{definition}

However, importantly, not every space is sequential.

\begin{proposition}
\label{notseq}
There is a topological space that is not sequential.
\end{proposition}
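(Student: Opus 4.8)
The plan is to exhibit a concrete topological space together with a subset that is sequentially open but not open, thereby witnessing the failure of the converse of Proposition~\ref{openseqopen}. The cleanest candidate is an uncountable set equipped with the \textbf{cocountable topology}: let $X = \R$ (or any uncountable set), and declare a set to be open if it is empty or has countable complement. First I would verify that this really is a topology --- the only nontrivial point is that an arbitrary union of cocountable sets is cocountable, since the complement of the union is an intersection of countable sets, hence countable.

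Next I would analyze convergence of sequences in this space. The key observation is that a sequence $(x_n)$ converges to $y$ if and only if it is \emph{eventually constant} equal to $y$. Indeed, if $(x_n)$ is not eventually equal to $y$, then the set $S = \{ x_n \st x_n \neq y \}$ is countable, so $X \setminus S$ is a neighbourhood of $y$ that excludes infinitely many terms, preventing convergence to $y$; conversely an eventually constant sequence trivially converges. From this it follows immediately that \emph{every} subset $A \subseteq X$ is sequentially open: if $(x_n)$ lies in $X \setminus A$ and converges to some $y$, then $(x_n)$ is eventually equal to $y$, forcing $y \in X \setminus A$, so no sequence converges out of $A$.

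Finally I would pick a witness: take $A$ to be any countably infinite subset of $X$, for instance $A = \N \subseteq \R$. Then $A$ is sequentially open by the previous paragraph, but $A$ is not open, because its complement $\R \setminus \N$ is uncountable, not countable (and $A$ itself is nonempty). Hence $X$ with the cocountable topology is not sequential, which proves the proposition. As a remark one could note that this space is also far from Hausdorff --- any two nonempty opens intersect --- which is consistent with the phenomenon, though not needed for the argument.

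I do not anticipate a serious obstacle here; the argument is elementary once the cocountable topology is chosen. The one point requiring a little care is the characterization of convergent sequences, which relies essentially on the uncountability of $X$ (so that the complement of a countable "bad set'' is still a legitimate neighbourhood), so I would make sure to state that hypothesis explicitly. An alternative route would be to use an uncountable product such as $\{0,1\}^{\R}$ and the fact that sequential continuity fails there, but the cocountable topology gives the shortest self-contained proof.
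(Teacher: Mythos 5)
Your proposal is correct and coincides with the first of the three examples the paper gives for this proposition: the countable complement topology on an uncountable set, with the same neighbourhood $\left(X \setminus \{x_n \st n \in \N\}\right) \cup \{y\}$ argument showing that convergent sequences are eventually constant, hence every subset is sequentially open while countably infinite subsets are not open. No gaps; the paper merely supplements this with two further examples (the order topology on $[0,\omega_1]$ and an uncountable product $\{0,1\}^X$) that it reuses later in the discussion of nets.
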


\begin{proof}
Any of the three examples below constitutes a proof.

\begin{description}
\item{Example 1:} Let $X$ be an uncountable set, such as the set of real numbers. Consider $(X, \tau_{cc})$, the countable complement topology on $X$. Thus $A \subseteq X$ is closed if and only if $A=X$ of $A$ is countable. Suppose that a sequence $(x_n)$ has a limit $y$. Then the neighbourhood \[ \left(\R \setminus \{x_n \st n \in \N \} \right) \cup \{y\}\] of $y$ must contain $x_n$ for $n$ large enough. This is only possible if $x_n = y$ for $n$ large enough. Consequently a sequence in any set $A$ can only converge to an element of $A$, so every subset of $X$ is sequentially open. But as $X$ is uncountable, not every subset is open. So $(X, \tau_{cc})$ is not sequential.
\item{Example 2:} Consider the order topology on the ordinal $\omega_1 +1 = [0, \omega_1]$. Because $\omega_1$ has cofinality $\omega_1$, every sequence of countable ordinals has a countable supremum. Hence no sequence of countable ordinals converges to $\omega_1$, so $\{ \omega_1 \}$ is sequentially open. However, $\{ \omega_1 \}$ is not open as $\omega_1$ is a limit ordinal. So the order topology on $[0, \omega_1]$ is not sequential.
\item{Example 3:} Let $X$ be an uncountable set and let $\{0,1\}$ have the discrete topology. Consider $\Pw(X) = \{0,1\}^X$ with the product topology. Let $\A \subseteq \Pw(X)$ be the collection of all uncountable subsets of $X$. $\A$ is not open; indeed every basic open contains finite sets. However, we claim that $\A$ is sequentially open. Let $(X_n)$ be a sequence of countable subsets of $X$ and suppose that $X_n \to Y$. Then for every $x \in X$ we must have $x \in Y$ if and only if $x \in X_n$ for $n$ large enough. In particular \[ Y \subseteq \bigcup_{n \in \N} X_n.\] But $ \displaystyle \cup_{n \in \N} X_n$ is a countable union of countable sets. Hence a sequence of countable sets can only converge to countable sets, so $\A$ is sequentially open. \qedhere
\end{description}
\end{proof}

Still, a large class of topological spaces is sequential.

\begin{definition}
A \textbf{countable basis at a point $x$} is a countable set \linebreak\mbox{$\{U_n \st n \in \N \}$} of neighbourhoods of $x$, such that for any neighbourhood $V$ of $x$ there is an $n \in \N$ such that $U_n \subseteq V$.

A topological space is \textbf{first countable} if every point has a countable basis.
\end{definition}

Every metric space is first countable, as 
\[\left\{ B\left(x, \frac{1}{n+1}\right) \st n \in \N  \right\} \]
is a countable basis at any point $x$. We can prove that every first countable space is sequential by generalizing the proof that every metric space is sequential.

\begin{proposition}
\label{firstcountableseq}
Every first countable space $X$ (and hence every metric space) is sequential.
\end{proposition}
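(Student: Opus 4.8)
The plan is to prove the one direction that is not yet established: by Proposition~\ref{openseqopen} an open set is always sequentially open, so it remains to show that in a first countable space every sequentially open set is open. I would argue by contraposition, imitating the argument that every metric space is sequential, with an arbitrary countable basis at a point playing the role of the family $\left\{ B\left(y,\frac{1}{n+1}\right) \st n \in \N \right\}$.

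So suppose $A \subseteq X$ is not open. Then there is a point $y \in A$ such that every neighbourhood of $y$ meets $X \setminus A$. Fix a countable basis $\{U_n \st n \in \N\}$ at $y$. The first step --- and the one place where a little care is needed --- is to replace this basis by a \emph{decreasing} one: set $V_n = U_0 \cap U_1 \cap \cdots \cap U_n$. Each $V_n$ is still a neighbourhood of $y$, being a finite intersection of neighbourhoods; we have $V_0 \supseteq V_1 \supseteq V_2 \supseteq \cdots$; and $\{V_n \st n \in \N\}$ is still a basis at $y$, since $V_n \subseteq U_n$ and hence any neighbourhood $W$ of $y$ contains some $U_n \supseteq V_n$.

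Now, because $y$ witnesses the failure of openness, each $V_n$ meets $X \setminus A$, so we may choose \[ x_n \in (X \setminus A) \cap V_n \] for every $n \in \N$. I claim $x_n \to y$. Given any neighbourhood $V$ of $y$, there is an $n$ with $V_n \subseteq V$; then for every $m \geq n$ we have $x_m \in V_m \subseteq V_n \subseteq V$ by nestedness, so $V$ contains all but finitely many terms of the sequence. Thus $(x_n)$ is a sequence in $X \setminus A$ converging to $y \in A$, which shows that $A$ is not sequentially open. Contraposing, every sequentially open set in $X$ is open, and $X$ is sequential.

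The only genuine obstacle is the one flagged above: choosing $x_n \in U_n$ for an \emph{arbitrary} countable basis need not produce a convergent sequence, because the tail $\{x_m \st m \geq n\}$ need not lie inside $U_n$. Passing to the nested family $V_n$ repairs this, and it uses exactly that a finite intersection of open sets is open --- which is precisely what is unavailable, at the relevant point, in the three non-sequential spaces of Proposition~\ref{notseq}, all of which fail to be first countable there.
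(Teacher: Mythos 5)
Your proof is correct and follows essentially the same route as the paper: the paper also chooses $x_n \in (X \setminus A) \cap \left( \bigcap_{i=0}^n U_i \right)$, which is exactly your nested family $V_n$. The extra discussion of why the nesting is needed is a nice touch, but the argument itself is the same.
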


\begin{proof}
Because of Proposition \ref{openseqopen}, we only need to prove that every sequentially open set $A$ is also open. So suppose that $A$ is not open. Then there is an $y \in A$ such that every neighbourhood of $y$ intersects $X \setminus A$. Let $\{U_n \st n \in \N\}$ be a countable basis at $y$. For every $n\in \N$, we can choose 
\[ x_n \in (X \setminus A) \cap \left( \bigcap_{i=0}^n U_i \right). \]
Then for every neighbourhood $V$ of $y$, there is an $n \in \N$ such that $U_n \subseteq V$, and hence $x_m \in V$ for every $m \geq n$. So $(x_n)$ is a sequence in $X \setminus A$ that converges to $y \in A$. Therefore $A$ is not sequentially open, as required.
\end{proof}

Sequential spaces are also exactly those spaces $X$ where sequences can correctly define continuity of functions from $X$ into another topological space.

\begin{lemma}
\label{lemmaseq}
Let $X$ be a topological space. Then $A \subseteq X$ is sequentially open if and only if every sequence with a limit in $A$ has all but finitely many terms in $A$.
\end{lemma}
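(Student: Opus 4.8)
The plan is to unwind both definitions and to move between them by passing to subsequences. Recall that, by definition, $A$ is sequentially open precisely when no sequence lying entirely in $X \setminus A$ has a limit in $A$. So the claim amounts to showing that this condition is the same as requiring that every convergent sequence whose limit lies in $A$ is \emph{eventually} in $A$, i.e. has only finitely many terms outside $A$.

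For the direction from right to left, I would argue by contradiction. If $A$ were not sequentially open, there would be a sequence $(x_n)$ in $X \setminus A$ with a limit $y \in A$. By the hypothesis, all but finitely many $x_n$ lie in $A$; but \emph{none} of them does, which is impossible since the index set $\N$ is infinite. Hence no such sequence exists and $A$ is sequentially open.

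For the direction from left to right, suppose $A$ is sequentially open and let $(x_n) \to y$ with $y \in A$. If infinitely many terms were outside $A$, I would extract the subsequence $(x_{n_k})$ consisting exactly of those terms; this is a genuine sequence in $X \setminus A$. Since every subsequence of a convergent sequence converges to the same limit (recalled in Section~1), we get $(x_{n_k}) \to y \in A$, contradicting the sequential openness of $A$. Therefore only finitely many terms of $(x_n)$ lie outside $A$, as required.

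The whole argument is essentially a definition chase; the only point needing a little care is the left-to-right direction, where one must remember to invoke the fact that subsequences inherit limits in order to manufacture the forbidden sequence in $X \setminus A$. I do not expect any deeper obstacle.
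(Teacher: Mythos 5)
Your proposal is correct and follows essentially the same route as the paper's proof: the paper states the contrapositive ("not sequentially open iff some sequence with infinitely many terms in $X \setminus A$ has a limit in $A$") and uses the same two ingredients you do, namely that a sequence lying wholly outside $A$ trivially has infinitely many terms outside $A$, and that extracting the subsequence of terms outside $A$ preserves the limit. No gaps.
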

\begin{proof}
We prove that $A \subseteq X$ is \emph{not} sequentially open if and only if there is a sequence with infinitely many terms in $X \setminus A$ and with a limit in $A$.

If $A$ is not sequentially open, then by definition there is a sequence with terms in $X \setminus A$ but with limit in $A$.

Conversely, suppose $(x_n)$ is a sequence with infinitely many terms in $X \setminus A$ that converges to $y \in A$. Then $(x_n)$ has a subsequence in $X \setminus A$ that must still converge to $y \in A$, so $A$ is not sequentially open.
\end{proof}

\begin{proposition}
The following are equivalent for any topological space $X$:
	\begin{enumerate}
	\item $X$ is sequential;
	\item for any topological space $Y$ and function $f: X \to Y$, $f$ is continuous if and only if $f$ preserves convergence (i.e. whenever $x_n \to y$ in $X$, also $f(x_n) \to f(y)$ in $Y$).
	\end{enumerate}
\end{proposition}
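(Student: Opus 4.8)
The plan is to prove the two implications separately. For $1 \Rightarrow 2$, recall that continuous functions always preserve convergence of sequences (noted in Section 1), so it suffices to establish the converse half of the equivalence in item~2. Assume $X$ is sequential and $f : X \to Y$ preserves convergence; I would show $f$ is continuous by checking that $f^{-1}(V)$ is open for every open $V \subseteq Y$. Since $X$ is sequential, it is enough to show $f^{-1}(V)$ is sequentially open. So let $(x_n)$ be a sequence in $X \setminus f^{-1}(V)$ converging to some $y \in X$; then $f(x_n) \to f(y)$ because $f$ preserves convergence, and each $f(x_n)$ lies in $Y \setminus V$. As $V$ is open, it is sequentially open by Proposition~\ref{openseqopen}, so $f(y) \notin V$, i.e.\ $y \notin f^{-1}(V)$. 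Hence $f^{-1}(V)$ is sequentially open, therefore open, and $f$ is continuous.

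For $2 \Rightarrow 1$ I would argue by contraposition: assuming $X$ is not sequential, I would build a space $Y$ and a convergence-preserving $f : X \to Y$ that is not continuous, so that the equivalence in item~2 fails. The natural choice is to take $Y$ to have the same underlying set as $X$, equipped with the collection $\tau_s$ of \emph{all} sequentially open subsets of $X$, and to let $f$ be the identity map. The first step is to check that $\tau_s$ really is a topology: clearly $\emptyset$ and $X$ are sequentially open, and using the characterization in Lemma~\ref{lemmaseq} (a set is sequentially open iff every sequence with a limit in the set has all but finitely many terms in the set) one verifies quickly that arbitrary unions and finite intersections of sequentially open sets are again sequentially open. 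By Proposition~\ref{openseqopen} this topology is finer than the original one.

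Next I would check that the identity map $f : X \to (X, \tau_s)$ preserves convergence: if $x_n \to y$ in $X$ and $V \in \tau_s$ is a neighbourhood of $y$, then $V$ is sequentially open, so by Lemma~\ref{lemmaseq} the sequence $(x_n)$ has all but finitely many terms in $V$; hence $x_n \to y$ in $(X, \tau_s)$ as well. Finally, $f$ is not continuous: because $X$ is not sequential there is a set $A$ that is sequentially open but not open, i.e.\ $A \in \tau_s$ while $f^{-1}(A) = A$ is not open in $X$. Thus $f$ preserves convergence without being continuous, which completes the contrapositive.

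I expect the construction of $Y$ in the second implication to be the crux — in particular, recognizing that the sequentially open sets form a topology on $X$ (finer than the original), and that every sequence converging in $X$ still converges after passing to this finer topology. Once Proposition~\ref{openseqopen} and Lemma~\ref{lemmaseq} are in hand, the remaining verifications are routine.
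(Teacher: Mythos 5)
Your proposal is correct and follows essentially the same route as the paper: the forward direction reduces continuity to checking that preimages of opens are sequentially open, and the converse builds the finer topology of sequentially open sets and shows the identity map preserves convergence without being continuous, using Lemma \ref{lemmaseq} at the same points. The only cosmetic difference is that you phrase the first implication directly rather than by contradiction.
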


\begin{proof}
\begin{description}
\item[$1 \Rightarrow 2$: ] Suppose $X$ is sequential. Any continuous function preserves convergence of sequences, so we only need to prove that if $f: X \to Y$ preserves convergence, then $f$ is continuous. Suppose for contradiction that $f$ is not continuous. Then there is an open $U \subseteq Y$ such that $f^{-1}(U)$ is not open in $X$. As $X$ is sequential, $f^{-1}(U)$ is also not sequentially open, so there is a sequence $(x_n)$ in $X \setminus f^{-1}(U)$ that converges to an $y \in f^{-1}(U)$. However $\left(f(x_n)\right)$ is then a sequence in the closed set $Y \setminus U$, so it cannot have $f(y)$ as a limit. So $f$ does not preserve convergence, as required.
\item[$2 \Rightarrow 1$: ] Suppose that the topological space $(X, \tau)$ is not sequential. Let $(X, \tau_{seq})$ be the topological space where $A \subseteq X$ is open if and only if $A$ is sequentially open in $(X, \tau)$. This is indeed a topology: it is trivial that $\emptyset$ and $X$ are sequentially open, and that any union of sequentially open sets is also sequentially open. It remains to prove that the intersection of two sequentially open sets $A$ and $B$ is sequentially open. Suppose that $(x_n)$ is a sequence with limit $y \in A \cap B$. By Lemma \ref{lemmaseq}, $(x_n)$ must have all but finitely many terms in $A$ and all but finitely many terms in $B$. So $(x_n)$ has all but finitely many terms in $A \cap B$. By Lemmma \ref{lemmaseq} again,  $A \cap B$ is sequentially open.

As $(X, \tau)$ is not already sequential, the topology $\tau_{seq}$ is strictly finer than $\tau$. Hence the identity map 
\[id : (X, \tau) \to (X, \tau_{seq}) \]
is not continuous. We claim that $f$ nonetheless preserves convergence. Indeed, suppose $x_n \to y$ in $(X, \tau)$. Every open neighbourhood of $y$ in $(X, \tau_{seq})$ is sequentially open in $(X, \tau)$, so by Lemma \ref{lemmaseq} contains all but finitely many terms of $(x_n)$. Hence also $x_n \to y$ in $(X, \tau_{seq})$, as required.\qedhere
\end{description}
\end{proof}

\section{Sequential spaces as quotients of metric spaces}

First, we recall the definition of a quotient space. Let $X$ be a topological space and let $\sim$ be an equivalence relation on $X$. Consider the set of equivalence classes $X / \sim$ and the projection map $\pi: X \to X/\sim$. We topologize $X / \sim$ by defining $A \subseteq X / \sim$ to be open if and only if $\pi^{-1}(A)$ is open in $X$.

Note that given a surjective function $f: X \to Y$ such that $A \subseteq Y$ is open if and only if $f^{-1}(A)$ is open in $X$, we can consider $Y$ to be a quotient of $X$. Indeed, define an equivalence relation $\sim$ on $X$ such that $x \sim y$ if and only if $f(x) = f(y)$, i.e. the equivalence classes are the fibers of $f$. Then $X / \sim$ is isomorphic to $Y$ by mapping the equivalence class of $x$ to $f(x)$.

We are now ready to prove that the sequential spaces are exactly the quotients of metric spaces. This is a corollary of the following two propositions.

\begin{proposition}
\label{quotientseq}
Any quotient $X / \sim$ of a sequential space $X$ is sequential.
\end{proposition}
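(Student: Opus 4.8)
The plan is to reduce the statement to the sequentiality of $X$ together with the (automatic) continuity of the quotient map. Write $\pi : X \to X/\sim$ for the projection. By Proposition \ref{openseqopen} applied to $X/\sim$, every open set there is sequentially open, so it suffices to prove the converse: every sequentially open $A \subseteq X/\sim$ is open. Now by the definition of the quotient topology, $A$ is open in $X/\sim$ exactly when $\pi^{-1}(A)$ is open in $X$; and since $X$ is sequential, $\pi^{-1}(A)$ is open in $X$ as soon as it is sequentially open in $X$. Thus the whole argument comes down to showing: \emph{if $A$ is sequentially open in $X/\sim$, then $\pi^{-1}(A)$ is sequentially open in $X$.}

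To do this I would take a sequence $(x_n)$ in $X \setminus \pi^{-1}(A)$ together with a point $y \in X$ such that $x_n \to y$, and show $y \notin \pi^{-1}(A)$. The key point is that $\pi$ is continuous — this is immediate from the definition of the quotient topology, since preimages of opens are open — so, as continuous functions preserve convergence of sequences, $\pi(x_n) \to \pi(y)$ in $X/\sim$. Here one should note the bookkeeping identity $X \setminus \pi^{-1}(A) = \pi^{-1}\big((X/\sim) \setminus A\big)$, so that $x_n \notin \pi^{-1}(A)$ is the same as $\pi(x_n) \notin A$; hence $(\pi(x_n))$ is a sequence in $(X/\sim) \setminus A$. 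Since $A$ is sequentially open, its limit $\pi(y)$ cannot lie in $A$, i.e. $\pi(y) \notin A$, i.e. $y \notin \pi^{-1}(A)$, as wanted.

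I do not expect a genuine obstacle here: the proof is a short diagram chase using only (i) the definition of the quotient topology, invoked twice — once to trade openness of $A$ for openness of $\pi^{-1}(A)$, and once to get continuity of $\pi$ for free — (ii) the fact that continuous maps preserve sequential convergence, and (iii) the hypothesis that $X$ is sequential. The one spot that deserves care is precisely the complement bookkeeping mentioned above, ensuring that a sequence avoiding $\pi^{-1}(A)$ is pushed by $\pi$ to a sequence avoiding $A$ and with the right limit.
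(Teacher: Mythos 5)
Your proof is correct and is essentially the paper's argument in contrapositive form: both reduce to the fact that a sequence in $X \setminus \pi^{-1}(A)$ converging to $y$ is pushed by the (automatically continuous) quotient map to a sequence in $(X/\sim) \setminus A$ converging to $\pi(y)$, combined with the sequentiality of $X$ and the definition of the quotient topology. The paper argues ``$A$ not open $\Rightarrow$ $A$ not sequentially open'' while you argue ``$A$ sequentially open $\Rightarrow$ $A$ open,'' but the content is identical.
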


\begin{proof}
Suppose that $A \subseteq X / \sim$ is not open. We need to prove that $A$ is not sequentially open either. By definition of quotient space, $\pi^{-1}(A)$ is not open in $X$. As $X$ is sequential, there is a sequence $(x_n)$ in $X \setminus \pi^{-1}(A)$ that converges to some $y \in \pi^{-1}(A)$. But $\pi$ is continuous, so it preserves convergence of sequences. Hence $(\pi(x_n))$ is a sequence in $(X/\sim) \setminus A$ with limit $\pi(y) \in A$. Thus $A$ is not sequentially open, as required.
\end{proof}

\begin{proposition}[Franklin \cite{franklin}]
\label{franklin}
Every sequential space $X$ is a quotient of some metric space.
\end{proposition}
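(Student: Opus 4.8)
The plan is to realise $X$ as a quotient of a (typically huge) metric space assembled from convergent sequences. Let $S$ be the set of all pairs $s = (\sigma, y)$ with $\sigma = (\sigma_n)$ a sequence in $X$ converging to $y$. For each $s \in S$ take a copy $M_s$ of the metric space $\{0\} \cup \{\frac{1}{n+1} \st n \in \N\} \subseteq \R$, writing $0_s$ for its limit point and $n_s$ for the point $\frac{1}{n+1}$; this space is just ``a convergent sequence together with its limit''. Let $M$ be the topological disjoint union $\bigsqcup_{s \in S} M_s$ and define $f : M \to X$ piecewise, by $f(0_s) = y$ and $f(n_s) = \sigma_n$ when $s = (\sigma, y)$. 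The claim will be that $f$ is a quotient map, which by the remark preceding Proposition~\ref{quotientseq} is exactly what is needed.

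First I would check that $M$ is metrizable, since an arbitrary disjoint union of metric spaces is not \emph{a priori} a metric space. One can equip $M$ with the metric that restricts to $\min(d_s, 1)$ on each piece $M_s$ and assigns distance $1$ to any two points lying in different pieces; a short case check gives the triangle inequality, and the induced topology is exactly the disjoint union topology (the ball of radius $1$ about any point stays within its own piece, and $\min(d_s, 1)$ is topologically equivalent to $d_s$ on $M_s$). Continuity and surjectivity of $f$ are then routine: continuity can be tested on each piece $M_s$, where $f$ is continuous precisely because $\sigma_n \to y$; and $f$ is onto because for every $x \in X$ the constant sequence at $x$ converges to $x$, so $x$ is hit by the corresponding piece.

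The heart of the argument is showing that $f^{-1}(A)$ open in $M$ forces $A$ open in $X$; this is the only step that uses the hypothesis that $X$ is sequential, and I expect it (together with the metrizability bookkeeping above) to be the main point requiring care. Given such an $A$, it suffices to prove $A$ is sequentially open. So let $\sigma = (\sigma_n)$ be a sequence in $X \setminus A$ with $\sigma_n \to y$, and set $s = (\sigma, y) \in S$. Every point $n_s$ of the piece $M_s$ has $f(n_s) = \sigma_n \notin A$, so no $n_s$ lies in $f^{-1}(A)$. If $y \in A$ then $0_s \in f^{-1}(A)$, and openness of $f^{-1}(A)$ would make $f^{-1}(A) \cap M_s$ a neighbourhood of $0_s$ in $M_s$, hence containing $n_s$ for all sufficiently large $n$ --- contradicting that $f^{-1}(A)$ contains no $n_s$. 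Therefore $y \notin A$, so $A$ is sequentially open and, $X$ being sequential, open. Thus $f$ is a quotient map and $X$ is a quotient of the metric space $M$; combined with Proposition~\ref{quotientseq} this identifies the sequential spaces with precisely the quotients of metric spaces.
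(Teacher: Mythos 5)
Your proof is correct and follows essentially the same construction as the paper's: a disjoint union of copies of the convergent-sequence space $\{0\} \cup \{\frac{1}{n+1} \st n \in \N\}$, one for each convergent sequence in $X$, mapped onto $X$ in the obvious way, with sequentiality used exactly once to upgrade ``sequentially open'' to ``open''. The only differences are cosmetic improvements: you index by pairs $(\sigma, y)$ rather than by sequences converging to their first term (which slightly cleans up the converse direction), and you spell out the metric on the disjoint union, which the paper merely asserts is metrizable.
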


\begin{proof}
Let $\C$ be the set of all sequences $(x_n)$ in $X$ that converge to their first term, i.e. $x_n \to x_0$.

Consider the subspace $Y = \{0\} \cup \{\frac{1}{n+1} \st n \in \N \}$ of $\R$ with the standard metric. Thus, $A \subseteq Y$ is open if and only if $0 \not\in A$ or $A$ contains all but finitely many elements of $Y$. Note that $Y$ is metric as a subspace of a metric space.

Now consider the disjoint sum (i.e. the coproduct in category theory jargon) 
\[ Z = \bigoplus_{(x_n) \in \C} \{(x_n)\} \times Y. \]
The underlying set of $Z$ is 
\[ \bigcup_{(x_n) \in \C} \{(x_n)\} \times Y \]
 and $A \subseteq Z$ is open if and only if for every $(x_n) \in \C$ the set
\[ \{ y \in Y \st ((x_n), y) \in A\} \]
is open in $Y$. Note that $Z$ is metrizable a disjoint sum of metric spaces.

Next consider the map 
\begin{align*}
f: \  Z &\to X \\
((x_n),0) &\mapsto x_0 \\
\left((x_n),\frac{1}{i+1}\right) &\mapsto x_i \quad \quad \quad \mbox{for all $i \in \N$}
\end{align*}
I claim that this map exhibits $X$ as a quotient of $Z$. Indeed $f$ is clearly surjective: for all $x \in X$ the constant sequence at $x$ converges to $x$, so $x=f((x),0)$. Hence it remains to show that $A \subseteq X$ is open if and only if $f^{-1}(A)$ is open in $Z$.

Suppose that $A \subseteq X$ is open. As $X$ is sequential, every sequence $(x_n)$ in $X$ converging to some $a \in A$ must have all but finitely many terms in $A$ by Lemma \ref{lemmaseq}. So if $((x_n), 0) \in f^{-1}(A)$ (which means that $(x_n)$ converges to $x_0 \in A$), then $f^{-1}(A)$ will contain all but finitely many elements of $\{(x_n)\} \times Y$. So for each $(x_n) \in \C$, the set
\[ \{ y \in Y \st ((x_n), y) \in f^{-1}(A) \} \]
is open in $Y$. Hence $f^{-1}(A)$ is open in $Z$. 

Conversely, if $A$ is not open in $X$, then there is some sequence $(x_n)$ in $X \setminus A$ that converges to some $a \in A$. But then
\[ \{ y \in Y \st ((x_n), y) \in f^{-1}(A) \} = \{0\} \]
is not open in $Y$, so $f^{-1}(A)$ is not open in $Z$.
\end{proof}

\begin{corollary}
\label{seqquotientmetric}
A topological space is sequential if and only if it is a quotient of a metric space.
\end{corollary}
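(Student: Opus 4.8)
The plan is to deduce this directly from the two propositions just established, together with the earlier fact that metric spaces are sequential, so essentially no new work is required.

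For the forward direction, I would start with an arbitrary sequential space $X$. Proposition \ref{franklin} exhibits $X$ as a quotient of the metric space $Z$ constructed there (a disjoint sum of copies of the convergent-sequence space $Y = \{0\} \cup \{\frac{1}{n+1} \st n \in \N\}$), so there is nothing left to check: a sequential space \emph{is} a quotient of a metric space by that construction.

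For the converse, I would take a space that is a quotient $M/\sim$ of some metric space $M$. By Proposition \ref{firstcountableseq}, every metric space is first countable and hence sequential, so $M$ is sequential. Then Proposition \ref{quotientseq}, applied to the sequential space $M$, tells us that $M/\sim$ is sequential. (If instead one is handed a surjection $f: M \to X$ with the property that $A \subseteq X$ is open iff $f^{-1}(A)$ is open, the remark preceding Proposition \ref{quotientseq} lets us view $X$ as the quotient $M/\sim$ by the fibres of $f$, so the same argument applies.)

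I do not expect any genuine obstacle here; the only thing to be careful about is invoking the correct implication chain (metric $\Rightarrow$ first countable $\Rightarrow$ sequential, then quotients preserve sequentiality) rather than trying to re-prove anything. The substance of the corollary lives entirely in Propositions \ref{quotientseq} and \ref{franklin}, and the corollary itself is just their conjunction.
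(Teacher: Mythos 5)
Your proposal is correct and follows exactly the same route as the paper: Proposition \ref{franklin} gives the forward direction, and the converse combines Proposition \ref{firstcountableseq} (metric spaces are sequential) with Proposition \ref{quotientseq} (quotients preserve sequentiality). No differences worth noting.
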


\begin{proof}
One direction is the above proposition. For the other direction, note that by Proposition \ref{firstcountableseq} every metric space is sequential, so by Proposition \ref{quotientseq} any quotient of a metric space is also sequential.
\end{proof}

We can now also easily prove that sequential is a strictly weaker notion than first countable.

\begin{proposition}
\label{seqnotfirstcountable}
There exists a sequential space which is not first countable.
\end{proposition}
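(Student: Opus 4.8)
The plan is to lean on Corollary \ref{seqquotientmetric}: since every quotient of a metric space is sequential, it suffices to exhibit a quotient of a metric space that is not first countable. A convenient choice is $X = \R/\N$, the quotient of $\R$ (with its standard metric) obtained by collapsing $\N = \{0,1,2,\ldots\}$ to a single point $* = \pi(\N)$. By the corollary $X$ is automatically sequential, so the entire content of the proposition is that $X$ is not first countable, and the natural place for this to fail is at $*$.

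To show $*$ has no countable basis I would argue by contradiction: suppose $\{U_n \st n \in \N\}$ is one. For each $n$, the set $\pi^{-1}(U_n)$ is open in $\R$ and contains $\N$, so it contains an interval $(n-\varepsilon_n,\, n+\varepsilon_n)$ around the integer $n$, and we may assume $\varepsilon_n < \tfrac14$. The idea is to build a single ``diagonal'' neighbourhood of $*$ that no $U_n$ fits inside: put
\[ V = \bigcup_{n \in \N} \left( n - \tfrac{\varepsilon_n}{2},\ n + \tfrac{\varepsilon_n}{2} \right). \]
This is open in $\R$ and contains $\N$, hence is saturated (the only non-singleton fibre of $\pi$ is $\N$ itself), so $\pi(V)$ is an open neighbourhood of $*$ and $\pi^{-1}(\pi(V)) = V$. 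But for each $n$ the point $p_n = n + \tfrac34\varepsilon_n$ lies in $(n-\varepsilon_n, n+\varepsilon_n) \subseteq \pi^{-1}(U_n)$, is not an integer, and is within $\tfrac{\varepsilon_m}{2}$ of no integer $m$ (it is too far from $n$ since $\tfrac34 > \tfrac12$, and $\varepsilon_n < \tfrac14$ keeps it away from all other integers), so $p_n \notin V$. Therefore $\pi(p_n) \in U_n \setminus \pi(V)$, and no $U_n$ is contained in the neighbourhood $\pi(V)$ of $*$ --- contradicting the basis property.

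The steps that $\R$ is metric, that each ``slice'' needed for openness in the quotient is open, and that $X$ is sequential are all immediate (the last straight from the corollary). The only place demanding care is the bookkeeping around saturation: one must choose $V$ to swallow all of $\N$ so that it descends to an open set and so that $\pi^{-1}(\pi(V)) = V$, which is precisely what upgrades $p_n \notin V$ to $\pi(p_n) \notin \pi(V)$; and one must check that shrinking the radius from $\varepsilon_n$ to $\tfrac{\varepsilon_n}{2}$ genuinely exposes a point of $U_n$ lying outside $V$. The same scheme proves non-first-countability of the ``sequential fan'', the quotient of $\bigoplus_{k \in \N} Y$ (with $Y$ as in the proof of Proposition \ref{franklin}) that identifies all the points $(k,0)$ to a single point; there one diagonalizes over a single term in each copy of $Y$.
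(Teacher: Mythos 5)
Your proof is correct and is essentially the paper's own argument: the same space $\R/\N$, sequential by Corollary \ref{seqquotientmetric}, shown not first countable at the collapsed point by the same diagonal trick of halving the radii $\varepsilon_n$. You are in fact somewhat more careful than the paper, which omits the normalization $\varepsilon_n<\tfrac14$ and the explicit witness $p_n$ needed to guarantee that the point exposed in $U_n$ avoids \emph{every} interval of $V$, not just the one around $n$.
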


\begin{proof}
Consider $\R$ with the standard topology, and the equivalence relation $\sim$ on $\R$ that identifies all the natural numbers, i.e. the equivalence classes are $\N$ and $\{x\}$ for every $x \in \R \setminus \N$.

The quotient space $\R / \sim$ is sequential as a quotient of a metric space.

I claim that $\R / \sim$ is not first countable, in particular that there is no countable basis at $\N$. Suppose that $\{U_n \st n \in \N \}$ is any countable collection of neighbourhoods of $\N$. For all $n \in \N$, $\pi^{-1}(U_n)$ is a neighbourhood of $n$ in $\R$ with the standard topology, so there is a $\delta_n > 0$ such that $B(n, \delta_n) \subseteq \pi^{-1}(U_n)$. Then consider 
\[ \pi\left(\bigcup_{n \in \N} B\left(n, \frac{\delta_n}{2} \right) \right). \]
This is a neighbourhood of $\N$ in $X / \sim$, but it doesn't contain $U_n$ for any $n \in \N$. So $\{U_n \st n \in \N \}$ is not a countable basis at $\N$, as required.
\end{proof}

\section{Nets save the day}

Looking back at the section on open and sequentially open sets, we see that convergence of sequences doesn't give us full information on the topology. For example the discrete topology and the countable complement topology on an uncountable set $X$ have the same converging sequences (namely $x_n \to y$ if and only if $x_n = y$ for $n$ large enough), but the discrete topology is strictly finer than the countable complement topology. The discrete topology is sequential, but the countable complement topology contains sequentially open sets which are not open.

Convergence of sequences works fine when the space is first countable, because a countable basis at a point allows us to \emph{approach} that point nicely with a sequence. However, if a point $x$ does not have a countable basis, then sequences might not succeed in getting \emph{close to} $x$, i.e. eventually in every neighbourhood of $x$. Sequences fall short in two respects: they are \emph{too short} and \emph{too thin}.

Remember Example 2 from the proof of Proposition \ref{notseq}; $[0, \omega_1]$ with the order topology. Even though $\{\omega_1\}$ is not open, a sequence of countable ordinals can only have a countable limit. Because a sequence only has countable many terms, it never can advance \emph{deep enough} in the ordinal numbers to get close to $\omega_1$. A possible solution is to allow sequences indexed by any linearly ordered set, instead of just the natural numbers. Indeed the $\omega_1$-sequence 
\begin{align*}
x: \omega_1 &\to [0, \omega_1] \\
	\alpha &\mapsto \alpha
\end{align*}
converges to $\omega_1$, even though every term is countable.

This overcomes the \emph{shortness} of sequences, but is still not enough to solve all difficulties. Indeed, reconsider Example 3 from the proof of Proposition \ref{notseq}; the product space $\Pw(X) = \{0,1\}^X$ where $X$ is uncountable. We take the subspace $\Q$ of $\Pw(X)$ which consists of those subsets of $X$ that are either finite or uncountable. Define $\A \subseteq \Pw(X)$ to be the collection of all uncountable subsets of $X$ like before. $\A$ is still not open in $\Q$ as every basic open contains finite sets. But no sequence of finite sets can converge to an uncountable set, not even if we allow sequences indexed by any linearly ordered set. Indeed, let $(X_i)_{i\in I}$ be any $I$-sequence of finite subsets of $X$, where $I$ is any linearly ordered set. For all $i \in I$, define 
\[ Z_i = \{x \in X \st \mbox{$x \in X_j$ for all  $j \geq i$} \}. \]
Every $Z_i$ is finite as $Z_i \subseteq X_i$. Moreover $Z_i \subseteq Z_j$ for $i \leq j$, so there can only be countably many distinct sets $Z_i$. (There can be at most one $Z_i$ with cardinality $1$, at most one with cardinality $2$, etc.) Hence $\cup_{i \in I} Z_i$ is countable as a countable union of countable sets. But if $X_i \to Y$ then we must have $Y = \cup_{i \in I} Z_i$, so $Y$ cannot be uncountable. Intuitively, the problem with sequences here is that they are linearly ordered, so they can only approach a point from \emph{one angle at a time}, whereas to capture the topology we need to consider all \emph{angles of approach} at the same time.

Nets are defined to overcome the shortcomings of sequences. Nets generalize sequences, but they can go both \emph{deeper} and \emph{wider} than sequences. Sequences associate a point to every natural number. Nets are more general, as they can associate a point to every element of a directed set.

\begin{definition}
A \textbf{directed set} is a set $D$ with a preorder relation (i.e. a reflexive and transitive binary relation) such that every two elements have an upper bound.
\end{definition}

Note that we \emph{don't} require that a pair of elements has a \emph{least} upper bound, we just require that some upper bound exists.

{
\setlength{\leftmargini}{5em} 
\begin{enumerate}
\item[Example 4]: Every linearly ordered set (such as the $\N$ with the usual order) is a directed set.
\item[Example 5]: Any collection of sets that is closed under binary intersections is a directed set when ordered by reverse inclusion, i.e. $X \le Y$ if and only if $Y \subseteq X$.

In particular, given any point $x$ of a topological space, the collection of all neighbourhoods of $x$ ordered by reverse inclusion is a directed set, which we write as $\Nbh(x)$.
\item[Example 6]: If $D$ and $E$ are directed sets, then so is their product $D \times E$ ordered by $(d_1,e_1) \le (d_2,e_2)$ if and only if $d_1 \le d_2$ in $D$ and $e_1 \le e_2$ in $E$.
\end{enumerate}
}

\begin{definition}
A \textbf{net} in a topological space $X$ is a function $f$ from a directed set $D$ to $X$. We usually write $f(d) = x_d$ for all $d \in D$, an refer to the net by $(x_d)_{d\in D}$.

A net $(x_d)_{d\in D}$ \textbf{converges} to a point $y \in X$ if for every neighbourhood $U$ of $y$, there is a $d \in D$ such that $x_e \in U$ for all $e \ge d$.
\end{definition}

Our motivation for defining nets was the hope that convergence of nets (in contrast to convergence of sequences) would completely determine the topology of the space. We now prove that this is indeed the case.

\begin{proposition}
\label{netsopen}
In any topological space $X$, a set $A \subseteq X$ is open if and only if no net in $X \setminus A$ has a limit in $A$.
\end{proposition}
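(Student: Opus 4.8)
The plan is to mimic the metric-space argument from the start of Section 2 (and the first-countable argument of Proposition \ref{firstcountableseq}), replacing the countable basis of balls by the full neighbourhood filter $\Nbh(y)$ from Example 5 — which is precisely the directed set that nets were designed to exploit.

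First, the easy direction. Suppose $A$ is open and let $(x_d)_{d \in D}$ be a net in $X \setminus A$. I would take any $y \in A$ and show the net cannot converge to $y$: since $A$ itself is a neighbourhood of $y$, convergence would yield some $d \in D$ with $x_e \in A$ for all $e \ge d$; but $x_d \in X \setminus A$ and $d \ge d$, a contradiction. Hence no net in $X \setminus A$ has a limit in $A$. This is just Proposition \ref{openseqopen} with ``sequence'' replaced by ``net'', and the proof is verbatim the same.

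For the converse, suppose $A$ is not open. Then there is a point $y \in A$ every neighbourhood of which meets $X \setminus A$. Using the directed set $\Nbh(y)$ of neighbourhoods of $y$ ordered by reverse inclusion, I would choose, for each $U \in \Nbh(y)$, a point $x_U \in U \cap (X \setminus A)$ — possible precisely because $A$ is not open at $y$. This defines a net $(x_U)_{U \in \Nbh(y)}$ lying entirely in $X \setminus A$. To see that it converges to $y$, take any neighbourhood $V$ of $y$; then for every $U \in \Nbh(y)$ with $U \ge V$, i.e.\ $U \subseteq V$, we have $x_U \in U \subseteq V$, which is exactly the definition of convergence. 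So this net in $X \setminus A$ has the limit $y \in A$, as required.

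I do not expect a genuine obstacle here: the whole purpose of the definition of a net, and of singling out $\Nbh(x)$ as a directed set in Example 5, is to make this argument succeed where the sequential version fails. The only point worth flagging is that the construction of $(x_U)$ uses the axiom of choice to pick a witness $x_U$ in each $U \cap (X \setminus A)$ simultaneously; one could instead phrase the statement contrapositively, as Lemma \ref{lemmaseq} does for sequences, but that is merely cosmetic.
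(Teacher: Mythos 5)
Your proof is correct and follows essentially the same route as the paper: the forward direction is the sequence argument verbatim (the paper takes a neighbourhood of $y$ contained in $A$, you take $A$ itself, which is the same thing), and the converse uses exactly the paper's net indexed by $\Nbh(y)$ with $x_U \in U \cap (X \setminus A)$ converging to $y$. Nothing is missing.
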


\begin{proof}
Let $A$ be open, let $(x_d)_{d\in D}$ be a net in $X \setminus A$ and take any $y \in A$. As $A$ is open, there is a neighbourhood of $y$ that is contained in $A$. Hence this neighbourhood does not contain any terms of the net, so $y$ is not a limit of $(x_d)$.

Conversely suppose that $A$ is not open. Then there is an $y \in A$ such that every neighbourhood of $y$ intersects $X \setminus A$. So there is a net $(x_U)_{U \in \Nbh(y)}$ such that \[ x_U \in (X \setminus A) \cap U \] for all neighbourhoods $U$ of $y$. For every neighbourhood $U$ of $y$, $U$ is an element of $\Nbh(y)$ such that $x_V \in U$ for every $V \ge U$ (i.e. $V \subseteq U$). Hence $(x_U)$ is a net in $X \setminus A$ that converges to $y \in A$, as required.
\end{proof}

Like for sequences, convergence of nets is preserved by continuous functions. But again, for nets the converse if also true.

\begin{proposition}
\label{netscontinuous}
Let $f: X \to Y$ be a function between two topological spaces. Then $f$ is continuous if and only if for every net $(x_d)_{d\in D}$ that converges to $y$ in $X$, we have $f(x_d)_{d\in D} \to f(y)$ in $Y$
\end{proposition}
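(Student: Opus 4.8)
The plan is to mirror the proof of Proposition \ref{netsopen}, treating the two implications separately, and to lean on Proposition \ref{netsopen} itself for the harder direction.

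For the forward implication I would assume $f$ is continuous and that $(x_d)_{d \in D} \to y$ in $X$. Given any neighbourhood $V$ of $f(y)$ in $Y$, continuity makes $f^{-1}(V)$ open, and it contains $y$, so it is a neighbourhood of $y$. Convergence of the net then supplies a $d \in D$ with $x_e \in f^{-1}(V)$ for all $e \ge d$, whence $f(x_e) \in V$ for all $e \ge d$. Since $V$ was an arbitrary neighbourhood of $f(y)$, this shows $f(x_d)_{d \in D} \to f(y)$.

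For the converse I would argue by contraposition: assuming $f$ is not continuous, I build a net witnessing the failure of convergence-preservation. Non-continuity gives an open $V \subseteq Y$ such that $f^{-1}(V)$ is not open in $X$. By Proposition \ref{netsopen}, there is a net $(x_d)_{d \in D}$ in $X \setminus f^{-1}(V)$ with a limit $y \in f^{-1}(V)$. Then $f(x_d)_{d \in D}$ lies entirely in $Y \setminus V$ while $f(y) \in V$, so applying Proposition \ref{netsopen} to the open set $V$ shows that this net cannot converge to $f(y)$. Hence $f$ does not preserve net convergence, as required. (Alternatively one could construct the net directly over $\Nbh(y)$, exactly as in the second half of the proof of Proposition \ref{netsopen}, choosing $x_U \in U$ with $f(x_U) \notin V$.)

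I do not expect a genuine obstacle: both directions are short once Proposition \ref{netsopen} is in hand, and the argument is essentially the net-level analogue of the ``$1 \Rightarrow 2$'' half of the earlier proposition on sequential spaces — the difference being that it now works in \emph{every} topological space, precisely because nets, unlike sequences, detect openness everywhere. The only point needing a little care is to invoke Proposition \ref{netsopen} in the correct space, namely for $f^{-1}(V)$ in $X$ and for $V$ in $Y$, rather than conflating the two.
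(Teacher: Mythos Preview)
Your proof is correct and follows essentially the same route as the paper: the forward direction pulls back a neighbourhood of $f(y)$ via continuity, and the converse argues by contraposition, invoking Proposition~\ref{netsopen} once in $X$ to produce the net and once in $Y$ to see that its image cannot converge to $f(y)$. Even the variable names and the two applications of Proposition~\ref{netsopen} line up with the paper's argument.
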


\begin{proof}
Suppose that $f$ is continuous and $x_d \to y$ in X. Take a neighbourhood $U$ of $f(y)$ in $Y$. Then $f^{-1}(U)$ is a neighbourhood of $y$ in $X$. By definition of convergence of nets, there is a $d \in D$ such that $x_e \in f^{-1}(U)$ for all $e \ge d$. So also $f(x_e) \in U$ for all $e \ge d$. This means that $f(x_d) \to f(y)$, as required.

Conversely, suppose that $f$ is not continuous, say $U \subseteq Y$ is open but $f^{-1}(U)$ is not open. By Proposition \ref{netsopen} there is a net $(x_d)_{d\in D}$ in $X \setminus f^{-1}(U)$ that converges to some $y \in f^{-1}(U)$. But then $(f(x_d))_{d\in D}$ is a net in the closed set $Y \subseteq U$ which (again by Proposition \ref{netsopen}) cannot converge to $f(y) \in U$. So $f$ doesn't preserve convergence of nets.
\end{proof}

Like a sequence, a net can have more than one limit, although in Hausdorff spaces every converging sequence has a unique limit. Sequences can also have unique limits in spaces that are not Hausdorff. Consider for example an uncountable set $X$ with the countable complement topology. In Example 1 we saw that limits are unique, but the space is obviously not Hausdorff. On the contrary, nets do succeed in exactly characterizing Hausdorff spaces.

\begin{proposition}
A space $X$ is Hausdorff is and only if no net has two distinct limits.
\end{proposition}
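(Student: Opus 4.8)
The plan is to prove both directions directly from the definitions, mirroring the structure of the sequence argument sketched earlier in the paper but exploiting the extra freedom that nets provide.

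For the easy direction, suppose $X$ is Hausdorff and a net $(x_d)_{d\in D}$ converges to two distinct points $y$ and $z$. Choose disjoint neighbourhoods $U$ of $y$ and $V$ of $z$. By convergence there is a $d_1$ with $x_e \in U$ for all $e \ge d_1$, and a $d_2$ with $x_e \in V$ for all $e \ge d_2$. Here is where directedness is used in an essential way: pick an upper bound $d$ of $d_1$ and $d_2$, so that $x_d \in U \cap V$, contradicting disjointness. So no net can have two distinct limits.

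For the converse, suppose $X$ is not Hausdorff, so there are distinct points $y, z$ such that every neighbourhood of $y$ meets every neighbourhood of $z$. The key construction is to index a net by the product directed set $\Nbh(y) \times \Nbh(z)$ (Example 6), which is directed precisely because each factor is. For each pair $(U,V)$, the intersection $U \cap V$ is nonempty, so we may pick a point $x_{(U,V)} \in U \cap V$. I claim this net converges to both $y$ and $z$. Given any neighbourhood $U_0$ of $y$, fix any neighbourhood $V_0$ of $z$; then for every $(U,V) \ge (U_0, V_0)$ we have $U \subseteq U_0$ and hence $x_{(U,V)} \in U \cap V \subseteq U_0$, so the net is eventually in $U_0$. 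The same argument with the roles of $y$ and $z$ swapped shows the net converges to $z$ as well, giving a net with two distinct limits.

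I do not anticipate a serious obstacle here; the argument is short once one has the right index set. The only point requiring a little care is the choice of index set in the converse: one might be tempted to use $\Nbh(y)$ alone, but then there is no canonical way to also make the net eventually land in small neighbourhoods of $z$, so passing to the product $\Nbh(y) \times \Nbh(z)$ is what makes the simultaneous-convergence argument work. Both directions, of course, use the axiom of choice to select the points $x_d$, exactly as in Proposition \ref{netsopen}.
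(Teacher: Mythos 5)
Your proof is correct and follows essentially the same route as the paper's: disjoint neighbourhoods plus an upper bound in $D$ for the forward direction, and a net indexed by the product $\Nbh(y) \times \Nbh(z)$ choosing a point in each $U \cap V$ for the converse (the paper simply takes $(U_0, X)$ where you take $(U_0, V_0)$, an immaterial difference). One tiny slip in your closing remark: only the converse direction invokes choice to select the points $x_{(U,V)}$; the forward direction constructs nothing.
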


\begin{proof}
Suppose that $X$ is Hausdorff and consider a net $(x_d)_{d\in D}$. Suppose for contradiction that $x$ and $y$ are distinct limits of $(x_d)$. Take disjoint neighbourhoods $U$ of $x$ and $V$ of $y$. By definition of convergence, there is a $d_x$ such that $x_e \in U$ for all $e \ge d_x$ and a $d_y$ such that $x_e \in V$ for all $e \ge d_y$. In particular we have $x_e \in U \cap V$ for an upper bound $e$ of $d_x$ and $d_y$ in the directed set $D$, contradicting the disjointness of $U$ and $V$. Thus $(x_d)$ cannot have two distinct limits.

Conversely, suppose that $X$ is not Hausdorff, so there are two distinct points $x$ and $y$ such that any neighbourhood of $x$ intersects any neighbourhood of $y$. So there is a net $(x_{(U,V)})_{(U,V) \in \Nbh(x) \times \Nbh(y)}$ such that \[ x_{(U,V)} \in U \cap V \] for any neighbourhoods $U$ of $x$ and $V$ of $y$. Take any neighbourhood $U_0$ of $x$ and any $(U,V) \in \Nbh(x) \times \Nbh(y)$ with $(U,V) \ge (U_0, X)$. By definition we have $U \subseteq U_0$ and thus $x_{(U,V)} \in U \cap V \subseteq U_0$. This proves that $x_{(U,V)} \to x$ and we can similarly show that $x_{(U,V)} \to y$. So the net $(x_{(U,V)})_{(U,V) \in \Nbh(x) \times \Nbh(y)}$ has two distinct limits, as required.\end{proof}

\section{Compactness and sequential compactness}

Let's now look at compactness of topological spaces. Remember that a space $X$ is \textbf{compact} if and only if every open covering of $X$ (i.e. every collection of open sets whose union is $X$) has a finite subcovering. It is sufficient to consider coverings of basic opens.

Equivalently, $X$ is compact if and only if every collection of closed sets with the finite intersection property (i.e. all finite intersections are nonempty) has a nonempty intersection.

An important theorem by Tychonoff \cite{tychonoff} says that any product of compact spaces is itself compact.

If every sequence in a topological space has a convergent subsequence, then we call the space \textbf{sequentially compact}. A metric space is compact if and only if it is sequentially compact (\cite{munkres}, theorem 28.2). However, bearing in mind the the difference between open and sequentially open, we should be very suspicious of this equivalence holding in general. And indeed, neither direction of the equivalence holds in every topological space.

\begin{proposition}
\label{compactnotseqcompact}
There is a topological space that is compact but not sequentially compact.
\end{proposition}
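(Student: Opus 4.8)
The plan is to exhibit a concrete compact space that contains a sequence with no convergent subsequence. The natural candidate is a product of the type covered by Tychonoff's theorem: I would take $X = \{0,1\}^{[0,1]}$, or more conveniently $X = \{0,1\}^{\R}$ (equivalently $\Pw(\R)$ with the product topology), which is compact because $\{0,1\}$ is compact and products of compact spaces are compact. So the first step is just to invoke Tychonoff to get compactness for free.

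The heart of the argument is the construction of a bad sequence. I would let $x_n \in \{0,1\}^{\R}$ be the function that reads off the $n$-th binary digit of its argument; that is, for $t \in [0,1]$ with binary expansion $t = \sum_{k} a_k 2^{-k-1}$, set $x_n(t) = a_n$ (and define $x_n$ arbitrarily, say $0$, outside $[0,1]$). Now suppose for contradiction that some subsequence $(x_{n_k})_{k \in \N}$ converges in the product topology. Since convergence in a product is pointwise, $(x_{n_k}(t))_k$ must converge in $\{0,1\}$ for every $t$, i.e. be eventually constant. But now I pick the specific point $t^* \in [0,1]$ whose $n_k$-th binary digit is $0$ when $k$ is even and $1$ when $k$ is odd. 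Then $x_{n_k}(t^*)$ alternates between $0$ and $1$, so it does not converge — contradiction. Hence no subsequence of $(x_n)$ converges, and $X$ is not sequentially compact.

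The one technical nuisance to handle carefully is the non-uniqueness of binary expansions (dyadic rationals have two expansions). The cleanest fix is to avoid dyadic rationals entirely: the point $t^*$ constructed above has a binary expansion that is not eventually constant (it has infinitely many $0$s among the odd-indexed $k$'s is not guaranteed, so I'd instead just note that the set of indices $n_k$ is infinite and I am free to prescribe the digits at those positions while filling the rest with, say, a non-repeating pattern, forcing $t^*$ to be irrational and hence have a unique expansion). Alternatively, and even more simply, I can replace $[0,1]$ by the space $\{0,1\}^{\N}$ itself: take $X = \{0,1\}^{(\{0,1\}^\N)}$, let $x_n(s) = s(n)$ for $s \in \{0,1\}^\N$, and for a given subsequence $(n_k)$ choose $s^* \in \{0,1\}^\N$ with $s^*(n_k)$ alternating — here there is no expansion ambiguity at all. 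I expect this bookkeeping about expansions to be the only real obstacle; once the indexing set is chosen to be $\{0,1\}^\N$, the argument is completely clean. I would present the $\{0,1\}^{\N}$ version (or equivalently $\Pw(\N)$) as the main example and perhaps remark that $\{0,1\}^{[0,1]}$ works too.

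Finally, I would double-check the logical structure: compactness comes from Tychonoff, and failure of sequential compactness comes from the explicit sequence together with the characterization of convergence in products as pointwise convergence, both of which are available from the prerequisites. This gives the required space and completes the proof.
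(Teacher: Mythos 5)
Your proposal is correct and follows essentially the same route as the paper: compactness of $\{0,1\}^{[0,1)}$ (or your variant $\{0,1\}^{\{0,1\}^{\N}}$) via Tychonoff, the digit-reading sequence, and a diagonal choice of a point whose digits alternate along any candidate subsequence, using that convergence in a product is pointwise. Your handling of the non-uniqueness of binary expansions matches the paper's (which restricts to expansions not ending in all $1$s and picks an $r$ with a unique expansion), and your $\{0,1\}^{\N}$-indexed variant is just a cosmetic cleanup of the same idea.
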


\begin{proof}
Let $\{0,1\}$ have the discrete topology and consider $\{0,1\}^{[0,1)}$ with the product topology.

By Tychonoff's theorem, $\{0,1\}^{[0,1)}$ is compact as product of compact spaces.

Consider the sequence $(x_n)$ where $x_n(r)$ equals the $n$'th digit in the binary expansion of $r$ for all $r \in [0,1)$, where we never pick an expansion that ends in all $1$s. We claim that $(x_n)$ does not have a convergent subsequence. Indeed if the subsequence $x_{k_n}$ has a limit $y$, then for any $r \in [0,1)$, we must have $x_{k_n}(r) = y(r)$ for $n$ large enough. But there is a real number $r \in [0,1]$ whose unique binary expansion has a $0$ in the $k_n$th position if $n$ is even and a $1$ in the $k_n$th position is $n$ is odd, contradicting that $x_{k_n}$ has any limit. Hence $\{0,1\}^{[0,1)}$ is not sequentially compact.
\end{proof}

\begin{proposition}
\label{seqcompactnotcompact}
There is a topological space that is sequentially compact but not compact.
\end{proposition}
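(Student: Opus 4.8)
The plan is to take $X = [0,\omega_1)$, the set of all countable ordinals with the order topology, and to verify the two halves separately. This is the natural companion to Example~2 above, where $[0,\omega_1]$ was used: removing the top point destroys compactness but not sequential compactness.

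That $X$ is \emph{not} compact I would witness by the open cover $\{\,[0,\alpha) \st \alpha < \omega_1\,\}$: every countable ordinal $\beta$ lies in $[0,\beta+1)$, so this is a cover, but any finite subfamily is contained in a single $[0,\gamma)$ (with $\gamma$ larger than the finitely many indices used), which misses $\gamma$. Equivalently, the closed sets $[\alpha,\omega_1)$ for $\alpha<\omega_1$ have the finite intersection property but empty intersection, so no finite subcover can exist.

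For sequential compactness, let $(x_n)$ be any sequence in $X$. First I would invoke the elementary fact that a sequence in a linearly ordered set always has a monotone subsequence: calling $n$ a \emph{peak} if $x_m \le x_n$ for all $m \ge n$, either there are infinitely many peaks, which form a non-increasing subsequence, or, past the last peak, one can repeatedly pass to a strictly larger term to build a strictly increasing subsequence. In a well-ordered set a non-increasing sequence is eventually constant, so in the first case the subsequence converges to that eventual value, which lies in $X$. In the second case we get $x_{n_0} < x_{n_1} < \cdots$; set $\beta = \sup_k x_{n_k}$. Then $\beta$ is a countable ordinal, being a countable supremum (i.e.\ union) of countable ordinals, so $\beta \in X$; moreover $\beta$ is a limit ordinal, every neighbourhood of $\beta$ contains a set $(\gamma,\beta]$ with $\gamma<\beta$, and $x_{n_k}\in(\gamma,\beta]$ as soon as $x_{n_k}>\gamma$, so $x_{n_k}\to\beta$. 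Either way $(x_n)$ has a subsequence converging inside $X$, proving $X$ sequentially compact.

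The one step I expect to be the real crux is the assertion $\beta = \sup_k x_{n_k} \in X$, i.e.\ that the extracted subsequence cannot ``escape to $\omega_1$'': this is exactly the fact recalled in the prerequisites that a countable union of countable sets is countable (equivalently, $\omega_1$ has cofinality $\omega_1$), and it is precisely what separates $[0,\omega_1)$ from the compact space $[0,\omega_1]$. The monotone-subsequence lemma is standard and the non-compactness is immediate, so nothing else should present difficulty.
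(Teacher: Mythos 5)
Your proposal is correct and follows essentially the same route as the paper: the space $[0,\omega_1)$, the cover $\{[0,\alpha) \st \alpha<\omega_1\}$ to refute compactness, and extraction of a monotone subsequence converging to its (countable) supremum for sequential compactness. The only cosmetic difference is that you obtain the monotone subsequence via the peak lemma (handling the eventually-constant non-increasing case separately), whereas the paper directly selects the indices at which $x_n$ is minimal among all later terms to get a nondecreasing subsequence; both hinge on the same fact that $\omega_1$ has uncountable cofinality.
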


\begin{proof}
Consider the order topology on $\omega_1 = [0, \omega_1)$, the set of all countable ordinals.

The open covering $\A = \{[0, \alpha) \st \alpha < \omega_1\}$ does not have a finite subcovering. Indeed the supremum of the sets in any finite subset $\mathcal{B}$ of $\A$ is a finite union of countable ordinals, and hence itself a countable ordinal which has a successor in $[0, \alpha)$ that is not covered by $\mathcal{B}$.

Now let $(x_n)$ be any sequence in $[0, \omega_1)$. Let $A \subseteq \N$ be the set of all indices $n$ such that $x_n$ is minimal in $\{x_m \st m \geq n \}$. Note that $A$ is infinite as by definition it cannot have a largest element. Let $k_n$ be the $n$'th element of $A$ for all $n \in \N$. The sequence $(x_{k_n})$ is then a subsequence of $(x_n)$, which is nondecreasing and therefore converges to the supremum of its elements. This supremum is countable as a countable union of countable ordinals, so it is indeed an element of $[0, \omega_1)$. Thus every sequence in $[0, \omega_1)$ has a convergent subsequence, as required.
\end{proof}

We can however characterize compact spaces using nets. To do this, we need the notion of a subnet.

\begin{definition}
Let $(x_d)_{d \in D}$ be a net. A \textbf{subnet} of $(x_d)_{d \in D}$ is a net $(x_{f(e)})_{e \in E}$ where $E$ is a directed set and $f: E \to D$ is a function such that:
\begin{enumerate}
\item if $e_1 \le e_2$, then $f(e_1) \le f(e_2)$ (i.e. $f$ is order preserving),
\item for all $d \in D$, there is an $e \in E$ such that $f(e) \ge d$ (i.e. $f(E)$ is cofinal in $D$).
\end{enumerate}
\end{definition}

\begin{proposition}
A topological space $X$ is compact if and only every net has a convergent subnet.
\end{proposition}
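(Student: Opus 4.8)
The plan is to prove both implications directly, using the finite-intersection-property formulation of compactness recalled above, together with the notion of a \emph{cluster point} of a net (a point every neighbourhood of which meets every tail of the net).

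For the forward direction, suppose $X$ is compact and let $(x_d)_{d\in D}$ be a net. First I would consider, for each $d \in D$, the tail $T_d = \{x_e \st e \ge d\}$ and its closure $\overline{T_d}$. Since $D$ is directed, the family $\{\overline{T_d} \st d \in D\}$ has the finite intersection property: given $d_1, \dots, d_n$, an upper bound $d$ of all of them satisfies $x_d \in T_{d_i}$ for each $i$. Compactness then yields a point $y \in \bigcap_{d\in D}\overline{T_d}$, which is exactly a cluster point of the net. The remaining — and main — task is to extract an honest subnet converging to $y$. For this I would let $E = \{(d,U) \st d \in D,\ U \in \Nbh(y),\ x_d \in U\}$, ordered by $(d_1,U_1) \le (d_2,U_2)$ iff $d_1 \le d_2$ and $U_2 \subseteq U_1$, and define $f(d,U) = d$. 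The cluster-point property is precisely what makes $E$ directed (given two elements, intersect the neighbourhoods and use that the intersection meets some tail above both indices) and what makes $f(E)$ cofinal in $D$; that $f$ is order preserving is immediate. Finally, given a neighbourhood $U_0$ of $y$, pick $d_0$ with $x_{d_0} \in U_0$ so that $(d_0,U_0) \in E$; then every $(d,U) \ge (d_0,U_0)$ has $x_d \in U \subseteq U_0$, so the subnet $(x_{f(e)})_{e\in E}$ converges to $y$.

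For the converse it suffices to show that every family $\F$ of closed sets with the finite intersection property has nonempty intersection. I would take $D$ to be the set of finite subsets of $\F$ directed by inclusion, and for each finite $F \subseteq \F$ choose (using the finite intersection property) a point $x_F \in \bigcap F$, obtaining a net $(x_F)_{F\in D}$. By hypothesis it has a subnet $(x_{f(e)})_{e\in E}$ converging to some $y$. For a fixed $C \in \F$, cofinality of $f(E)$ gives an $e_0$ with $f(e_0) \supseteq \{C\}$, and then for every $e \ge e_0$ we have $C \in f(e)$, hence $x_{f(e)} \in C$. Since $C$ is closed and the subnet is eventually in $C$, no point of $X \setminus C$ can be its limit, so $y \in C$. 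As $C \in \F$ was arbitrary, $y \in \bigcap \F$, and $X$ is compact.

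I expect the construction of the convergent subnet from a cluster point to be the crux of the argument: one must set up the index set $E$ so that it is genuinely directed, so that the projection is order preserving \emph{and} cofinal, and so that convergence actually holds — all three simultaneously. Everything else reduces to the finite intersection property and unwinding the definitions of net convergence and of subnet.
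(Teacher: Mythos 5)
Your proof is correct and takes essentially the same approach as the paper: the identical cluster-point construction with the index set $E$ of pairs $(d,U)$ and the projection $f(d,U)=d$ for the forward direction, and the same net indexed by finite subfamilies of a family of closed sets with the finite intersection property for the converse. The only difference is cosmetic: you phrase the converse directly (every such family has nonempty intersection) where the paper argues by contradiction from non-compactness.
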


\begin{proof}
Suppose $X$ is compact and let $(x_d)_{d \in D}$ be a net. The sets \[ X_d = \{x_e \st e \ge d\} \] have the finite intersection property, so by compactness their closures have a nonempty intersection. Thus we can take an $y \in \cup_{d \in D} \overline{X_d}$. Every neighbourhood $U$ of $y$ intersects $X_d$ for all $d \in D$. In other words, for all $d \in D$ there is an $e \ge d$ such that $x_e \in U$. (Such an $y$ is called a \textbf{cluster point} of the net $(x_d)$.)

Consider the set 
\[ E = \{ (d,U) \in D \times \Nbh(y) \st x_d \in U \}, \]
preordered by $(d_1, U_1) \le (d_2, U_2)$ if and only if $d_1 \le d_2$ in $D$ and $U_2 \subseteq U_1$. We claim that this is a directed set. Indeed let $(d_1, U_1)$ and $(d_2, U_2)$ be any pair of elements of $E$. There is an upper bound $e$ of $d_1$ and $d_2$ in $D$, and as $y$ is a cluster point of $(x_d)$, there is an $e' \ge e$ such that $x_{e'} \in U_1 \cap U_2$. Then $(e', U_1 \cap U_2)$ is an upper bound of $(d_1, U_1)$ and $(d_2, U_2)$ in $E$. Therefore $E$ is indeed a directed set.

Define the projection 
\begin{align*}
f:\quad E &\to D \\
   (d, U) &\mapsto d.
\end{align*}
This is clearly an order preserving function and it is a surjection as $d = d(d, X)$ for all $d \in D$. So $(x_{f(d,U)})_{(d,U)\in E}$ is a subnet of $(x_d)_{d \in D}$. Moreover, if $U$ is any neighbourhood of $y$, then there is by choice of $y$ a $d \in D$ such that $x_d \in U$. By definition of $E$ we have $x_{f(e,V)} \in U$ for all $(e, V) \ge (d, U)$. Therefore $x_{f(d,U)} \to y$, as required.

\underline{Conversely} suppose that $X$ is not compact. Then there is a collection $\A$ of closed sets with the finite intersection property, but with empty intersection. Let $\D$ be the set of finite subcollections of $A$, ordered by inclusion. This is clearly a directed set. We can choose a net $(x_\B)_{\B \in \D}$ where \[ x_\B \in \cap_{A \in \B} A \] for every finite $\B \subseteq \A$. Suppose for contradiction that $(x_{f(x)})_{e \in E}$ is a subnet of $(x_\B)_{\B \in \D}$ that converges to some $y \in X$. By assumption, there is an $A \in \A$ such that $y \not\in A$. As $A$ is closed, there is neighbourhood $U$ of $y$ such that $U \cap A = \emptyset$, and hence $x_\B \not\in U$ for all $\B \ge \{A\}$. As $f(E)$ is cofinal in $D$, there is an $e_1 \in E$ such that $\{A\} \le f(e_1)$. But there must also be an $e_2 \in E$ such that $x_{f(e)} \in U$ for all $e \ge e_2$. Let $e$ be an upper bound of $e_1$ and $e_2$. Then we must have $x_{f(e)} \in U$, but on the other hand $\{A\} \le f(e_1) \le f(e)$ so we must have $x_{f(e)} \not\in U$, a contradiction. So the net $(x_\B)_{\B \in \D}$ does not have a convergent subnet.
\end{proof}

It is now tempting to accept the following argument: \emph{In a compact space, every nets has a convergent subnet. So every sequence, considered as a net, has a convergent subnet, which is a convergent subsequence. So every compact space is sequentially compact.} But we know from Proposition \ref{compactnotseqcompact} that this is not true. The mistake is the fact that not every subnet of a sequence is a subsequence. In particular note that the function $f$ in the definition of a subnet need not be injective.

\section{Further reading}

\subsection{Nets and filters}

As an alternative to nets in $X$, one can consider filters on $X$. A \textbf{filter} on $X$ is a nonempty collection of subsets of $X$ that is closed under binary intersections and supersets, and does not contain $\emptyset$. A filter $\F$ \textbf{converges} to a point $x$ if $U \in \F$ for every neighbourhood $U$ of $x$. Given a net $(x_d)_{d \in D}$ in $X$ one can consider the filter
\[ \F = \{ A \subseteq X \st \exists d \in D: \mbox{$x_e \in A$ for all $e \geq d$} \} \]
of sets which \emph{eventually} contain all point of the net. This filter has the same limits as $(x_d)_{d \in D}$. Conversely given a filter $\F$ on $X$ one can consider the directed set $\F$ ordered by reversed inclusion. Then $\F$ converges to a point if and only if any net
\[ (x_A)_{A \in \F} \]
with
\[ x_A \in A \]
for all $A \in F$ converges to that point as well. Hence nets and filters are in many ways interchangable. Most of the propositions that I've proved using nets, can equally well be proven using filters. I've only considered nets, because they arize as a generalization of sequences, which was the starting point of this article. However it is instructive to also think about filters, as they give a diferent point of view. Only by combining both points of view, one can get the best insight into the mathematics.

Filters have another advantage. An easy application of Zorn's lemma gives that every filter on $X$ can be refined to an \textbf{ultrafilter}. An ultrafilter on $X$ is a filter which contains either $A$ or $X \setminus A$ for all $A \subseteq X$. The corresponding proposition for nets is that every net has a \textbf{universal} subnet. A net in $X$ is universal (also called an \textbf{ultranet}) if for every $A \subseteq X$, the net is either eventually in $A$ or eventually in $X \setminus A$. There is however no nice direct proof for the fact that every net has a universal subnet, filters are the more natural tool here.

From the fact that every filter has an ultrafilter refinement, an easy proof of Tychonoff's theorem is possible. Indeed, compact spaces can be characterized as those spaces where every ultrafilter has a limit. But then in a product of compact spaces, we can find a limit of any ultrafilter by considering the projections on every component and taking a limit in each of these compact spaces.

A proof of Tychonoff's theorem using just nets is also possible, but is not as elegant \cite{chernoff}.

Nets were introduced in 1922 by E.~H.~Moore and H.~L.~Smith in \cite{mooresmith}. Hence nets were at first called \emph{Moore-Smith sequences}. The theory of nets was further developed by Birkhoff \cite{birkhoff} (most of the propositions of this article first appear in his paper) and by Kelley \cite{kelley1950} (who introduces the terms \emph{net} and \emph{ultranet}, and who proves Tychonoff's theorem using ultrafilters). McShane \cite{mcshane} gives an extensive motivation for the definitions of nets and subnets.

Filters were introduced in 1937 by Cartan \cite{cartan1,cartan2}. Even though filters are now also used in very different contexts, Cartan's motivation for defining them was to generalize the notion of convergence for sequences. Indeed he starts of his article \emph{Th\'eorie des filtres} \cite{cartan1} by writing:
\begin{quote}
Malgr\'e les services rendus en topologie par la consideration des \emph{suites d\'enombrables}, leur emploi n'est pas adapt\'e \`a l'\'etude des espaces g\'en\'eraux. Nous voulons indiquer ici quel est l'instrument qui semble devoir les remplacer.\footnote{Translation from French: In spite of the accomplishments of considering countable sequences in topology, their use is not suitable in the study of general spaces. We want to indicate here which tool apparently should replace them.}
\end{quote}
Bourbaki's book on General Topology \cite{bourbaki} was the first to fully adopt the use of filters. The connections between nets and filters were investigated by Bartle \cite{bartle} and by Bruns and Schmidt \cite{bruns}.

\subsection{Sequential spaces and Fr\'echet-Urysohn spaces}

We have considered sequential spaces. Similar to sequential spaces are the Fr\'echet-Urysohn spaces. A topological space $X$ is \textbf{Fr\'echet-Urysohn} if the closure of any $A \subseteq X$ contains exactly the limits of sequences in $A$. They are also sometimes simply called Fr\'echet spaces, but this might cause confusion as there are other uses for the term \emph{Fr\'echet space}.

Any first countable space is Fr\'echet-Urysohn. This can be proven just like Proposition \ref{firstcountableseq}. The example from the proof of Proposition \ref{seqnotfirstcountable} shows that there is a Fr\'echet-Urysohn which is not first countable. Any Fr\'echet-Urysohn space is sequential, as by definition every sequentially closed set is its own closure. Fr\'echet-Urysohn are exactly those spaces of which every subspace is sequential. However not every subspace of a sequential space is sequential. Indeed, consider 
\[Y = \{(x,0) \st x \in \R \setminus \{0\} \}  \cup \{(0,1\} \cup \left\{ \left(\frac{1}{n+1},1\right) \st n \in \N \right\} \]
as a subset of the real plane. Let $(\R, \tau_q)$ be the quotient of $Y$ obtained by projecting onto the first coordinate. Then $(\R, \tau_q)$ is sequential as quotient of a metric space (Corollary \ref{seqquotientmetric}). But the subspace $\R \setminus \{ \frac{1}{n+1} \st n \in \N \}$ has a sequentially open set $\{0\}$ which is not open. So $(\R, \tau_q)$ is a sequential space that is not Fr\'echet-Urysohn.

Sequential spaces and Fr\'echet-Urysohn spaces where most intensively studied in the 1960s. Most of the results mentioned here where obtained by S.~P.~Franklin in \cite{franklin} and \cite{franklin2}. Sequential spaces and Fr\'echet-Urysohn spaces are also covered in Engelking's book \cite{engelking}.

\bibliographystyle{plain}
\bibliography{nets}

\end{document}